\newcommand{\F}{\mathbb{F}}
\newcommand{\K}{\mathbb{K}}
\renewcommand{\a}{\alpha}
\renewcommand{\b}{\beta}
\DeclareMathOperator{\AR}{AR}
\DeclareMathOperator{\R}{R}
\DeclareMathOperator{\Q}{Q}
\newcommand{\g}{g}
\newcommand{\Nop}{N}
\newcommand{\Nff}[1]{\Nop_{#1}}
\newcommand{\Mult}[3]{\mathrm{Mult}^{#1}_{#2/#3}}
\title{Uniform stability of ranks}
\author[Guy Moshkovitz]{Guy Moshkovitz\nfts{1}}
\address{\nfts{1}Department of Mathematics, City University of New York (Baruch College \& Graduate Center), New York, NY 10010, USA}
\email{guymoshkov@gmail.com}
\author[Daniel G. Zhu]{Daniel G. Zhu\nfts{2}}
\address{\nfts{2}Department of Mathematics, Princeton University, Princeton, NJ 08544, USA}
\email{zhd@princeton.edu}
\thanks{The first author is supported by NSF Award DMS-2302988. The second author is supported by the NSF Graduate Research Fellowships Program (NSF grant DGE-2039656).}
\begin{document}
\begin{abstract}
    Chen and Ye recently proved that the analytic rank of tensors is stable under field extensions, assuming a fixed base field. Using a more careful analysis, we show that this assumption is unnecessary.
\end{abstract}

\maketitle

\section{Introduction}
Given a $d$-tensor $T \in \F^{n_1} \otimes_\F \cdots \otimes_\F \F^{n_d}$ over a finite field $\F$, let $\AR(T)$ denote its analytic rank~\cite{GW11}. Given a finite extension $\K/\F$, let $T^{\K} \in \K^{n_1} \otimes_\K \cdots \otimes_\K \K^{n_d}$ be the natural image of $T$. Chen and Ye~\cite{ChenYe} recently showed that the analytic rank of a tensor is stable under field extensions.
\begin{thm}[\cite{ChenYe}] \label{thm:ARstab}
For any $d \geq 2$ and prime power $q$, there exist constants $C(d,q)$ and $c(d,q)$ such that for any $d$-tensor $T$ over $\F_q$ and any finite field extension $\K/\F_q$, we have 
\[c(d,q) \AR(T) \leq \AR(T^\K) \leq C(d,q) \AR(T).\]
\end{thm}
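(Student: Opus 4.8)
The plan is to use the \emph{geometric rank} $\mathrm{GR}(\cdot)$ of Kopparty--Moshkovitz--Zuiddam as a base-field-independent bridge between $\AR_{\F_q}(T)$ and $\AR_\K(T^\K)$ (the case $d=2$ is trivial, since there analytic rank coincides with matrix rank). Write $N := n_1+\cdots+n_{d-1}$ and let $V$ be the affine variety in $N$ variables over $\overline{\F_q}$ cut out by the $n_d$ multilinear forms making up $T(x_1,\dots,x_{d-1},\bullet)$. Then $\mathrm{GR}(T) = N - \dim V$; since $V$ is defined over $\F_q$ this is unaffected by base change, so $\mathrm{GR}(T^\K) = \mathrm{GR}(T)$ for every finite $\K/\F_q$; and unwinding the definition of analytic rank, $\AR_{\F'}(S) = N - \log_{|\F'|}\#V_S(\F')$ for $\F' \in \{\F_q,\K\}$, whence $\AR_{\F'}(S) \le \mathrm{GR}(S)$ over any finite field. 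Thus it is enough to establish the uniform estimate $c(d,q)\,\mathrm{GR}(T) \le \AR_\K(T^\K) \le \mathrm{GR}(T)$ for all finite $\K/\F_q$: applying the left inequality also at $\K=\F_q$ gives $\mathrm{GR}(T) \le c(d,q)^{-1}\AR_{\F_q}(T)$, after which the right and left inequalities yield $\AR_\K(T^\K) \le c(d,q)^{-1}\AR_{\F_q}(T)$ and $\AR_\K(T^\K) \ge c(d,q)\AR_{\F_q}(T)$, as desired.

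The right-hand inequality above is already in hand, so I would concentrate on the left one, $\AR_\K(T^\K) \ge c(d,q)\,\mathrm{GR}(T)$, splitting on $k := [\K:\F_q]$. For $k$ below a threshold $k_0 = k_0(d,q)$ fixed momentarily, $\K$ has bounded size, and this is precisely the known linear comparison of analytic and geometric rank over a fixed finite field, with its (field-dependent) constant: $\AR_\K(T^\K) \ge c_1(d,|\K|)\,\mathrm{GR}(T^\K) = c_1(d,|\K|)\,\mathrm{GR}(T)$, which is uniform since only finitely many fields appear. For $k \ge k_0$ I would instead count points crudely: by the elementary bound $\#W(\F') \le \deg(W)\cdot|\F'|^{\dim W}$ and the estimate $\deg V \le (d-1)^{\mathrm{GR}(T)}$ (Bézout, as $V$ is an intersection of degree-$(d-1)$ hypersurfaces of codimension $\mathrm{GR}(T)$), we get $\#V(\K) \le (d-1)^{\mathrm{GR}(T)}|\K|^{\dim V}$, hence $\AR_\K(T^\K) = N - \log_{|\K|}\#V(\K) \ge \mathrm{GR}(T)\bigl(1 - \tfrac{\log(d-1)}{k\log q}\bigr) \ge \tfrac12\mathrm{GR}(T)$ once $k \ge k_0 := \lceil 2\log(d-1)/\log q\rceil$. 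Combining the two ranges gives the estimate with $c(d,q) = \min\{\tfrac12,\ \min_{1 \le k \le k_0} c_1(d,q^k)\}$.

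The step I expect to be the genuine obstacle --- and where the $q$-dependence of the constants originates --- is the bounded-degree range: it rests on a \emph{linear} comparison between analytic rank and geometric rank (equivalently, partition rank) over each fixed small field, whereas only a polynomial comparison (Mili\'cevi\'c) is elementary, and establishing the linear comparison over small fields is the heart of the matter and is intrinsically $q$-sensitive. By contrast the large-degree range is soft, using only the crude point count, and its threshold $k_0$ depends on $d$ and $q$ alone --- indeed on $d$ alone, being $O(\log d)$ --- precisely because $\deg V$ is controlled in terms of $d$ and $\mathrm{GR}(T)$ with no reference to the possibly huge ambient dimension $N$. The remaining ingredients --- that $\AR \le \mathrm{GR}$ and that $\mathrm{GR}$ is base-field-independent (Kopparty--Moshkovitz--Zuiddam) --- are available in the literature.
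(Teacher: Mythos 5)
Your outer reduction is logically sound, and the large-extension half works: once $|\K| \ge (d-1)^2$, the point count $\#V(\K) \le \deg(V)\,|\K|^{\dim V}$ together with a B\'ezout bound $\deg V \le (d-1)^{\mathrm{GR}(T)}$ (which does require the standard trick of replacing the $n_d$ defining forms by $\mathrm{codim}\,V$ generic linear combinations) gives $\AR(T^\K) \ge \tfrac12\mathrm{GR}(T)$, and combined with the Kopparty--Moshkovitz--Zuiddam inequality $\AR \le \mathrm{GR}$ and base-change invariance of $\mathrm{GR}$ this would close the argument \emph{if} the bounded range $[\K:\F_q] < k_0$ were handled. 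But that range is a genuine gap, not a citable ingredient, as you half-concede in your final paragraph. The linear comparison $\mathrm{GR}(S) \le c_1(d,|\F'|)^{-1}\AR(S)$ over a fixed \emph{small} finite field $\F'$ is not available independently of \cref{thm:ARstab}: in Chen--Ye it is a \emph{consequence} of \cref{thm:ARstab} (extend scalars to a large field, run exactly your Lang--Weil/B\'ezout argument there, then transfer back down using stability of $\AR$ under extensions). Invoking it here is circular. The comparisons that are known unconditionally over small fields (Mili\'cevi\'c, Janzer) are polynomial in $\AR$, and feeding a polynomial bound into your scheme yields only $\AR(T^\K) \ge c\,\AR(T)^{1/C_d}$ rather than the claimed linear bound. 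Note that when $q \ge (d-1)^2$ your problematic range is empty, which correctly reflects that the entire difficulty of the theorem is concentrated at small $q$ --- precisely the case your proposal leaves open.

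The paper takes a route that avoids the $\AR$-versus-$\mathrm{GR}$ comparison altogether: by \cref{prop:AR-stable}, restricting along the multiplication tensor sandwiches $\AR(T^\K)$ between $\tfrac{1}{n}\Q(\Mult{d}{\K}{\F_q})\AR(T)$ and $\tfrac{1}{n}\R(\Mult{d}{\K}{\F_q})\AR(T)$ with $n=[\K:\F_q]$, so everything reduces to bounding the rank and subrank of this one explicit tensor linearly in $n$, which is done via towers of function fields (\cref{thm:qr}). To salvage your approach you would need an independent proof of a linear $\mathrm{GR}\le C(d,q)\AR$ bound over, say, $\F_2$ with $d$ large, and that appears to be an open problem in its own right.
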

The purpose of this note is to show that the constants in \cref{thm:ARstab} can be taken to be independent of $q$, a fact which has a few immediate consequences for the results of \cite{ChenYe}. For one, the analytic rank and the geometric rank are equivalent up to a constant independent of $q$. Moreover, Chen and Ye use \cref{thm:ARstab} to show the equivalence of several conjectures relating various notions of tensor rank; the independence of $q$ in \cref{thm:ARstab} in turn implies that if one of the aforementioned conjectures is true with constants independent of $q$, the same is true for the others.

For $d \geq 2$ and a finite extension $\K/\F$,\footnote{Recall that $\K$ forms a finite-dimensional vector space over $\F$, meaning $\K$ is isomorphic to $\F^n$ for some $n\ge 1$.} consider the
$\F$-multilinear map $\K^{d-1} \to \K$ given by $(d-1)$-ary multiplication,
and let $\Mult{d}{\K}{\F}$ denote the corresponding $d$-tensor over $\F$. Letting $\R(-)$ and $\Q(-)$ denote the rank\footnote{Also known as (classical) tensor rank, traditional rank, or cp-rank.} and subrank of tensors, respectively, Chen and Ye proved \cref{thm:ARstab} via the following lemma.
\begin{lem}[{\cite[extracted from proof of Theorem 4.2]{ChenYe}}]\label{prop:AR-stable}
    For any $d$-tensor $T$ over $\F_q$, 
    \[\frac{\Q(\Mult{d}{\F_{q^n}}{\F_q})}{n} \cdot \AR(T) \le \AR(T^{\F_{q^n}}) \le \frac{\R(\Mult{d}{\F_{q^n}}{\F_q})}{n} \cdot \AR(T).\]
\end{lem}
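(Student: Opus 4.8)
The plan is to reduce the lemma to a few standard structural properties of analytic rank by reinterpreting $T^{\F_{q^n}}$, after restriction of scalars to $\F_q$, as a Kronecker product of $\F_q$-tensors. Fix an $\F_q$-basis $e_1,\dots,e_n$ of $\F_{q^n}$ (where $n=[\F_{q^n}:\F_q]$); this identifies each leg $\F_{q^n}^{n_i}$ with $\F_q^{n_i n}$, turning $T^{\F_{q^n}}$ into an $\F_q$-tensor $\widetilde T$ of format $n_1 n\times\cdots\times n_d n$. Expanding $x_i=\sum_j e_j\,(x_i)_j$ with $(x_i)_j\in\F_q^{n_i}$ multilinearly gives
\[
T^{\F_{q^n}}(x_1,\dots,x_d)=\sum_{j_1,\dots,j_d}(e_{j_1}\cdots e_{j_d})\,T\bigl((x_1)_{j_1},\dots,(x_d)_{j_d}\bigr)\in\F_{q^n},
\]
and applying $\mathrm{Tr}_{\F_{q^n}/\F_q}$ replaces the coefficients $e_{j_1}\cdots e_{j_d}$ by $\mathrm{Tr}_{\F_{q^n}/\F_q}(e_{j_1}\cdots e_{j_d})$, which are precisely the entries of $\Mult{d}{\F_{q^n}}{\F_q}$ once $\F_{q^n}$ is identified with its $\F_q$-dual through the (nondegenerate, as $\F_{q^n}/\F_q$ is separable) trace form. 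Hence $\widetilde T = T\otimes_{\F_q}\Mult{d}{\F_{q^n}}{\F_q}$. Choosing $\psi_{\F_{q^n}}=\psi_{\F_q}\circ\mathrm{Tr}_{\F_{q^n}/\F_q}$ (a nontrivial additive character of $\F_{q^n}$, since the trace is surjective) when computing biases then yields $\mathrm{bias}_{\F_{q^n}}(T^{\F_{q^n}})=\mathrm{bias}_{\F_q}\bigl(T\otimes_{\F_q}\Mult{d}{\F_{q^n}}{\F_q}\bigr)$, and since $\log_{q^n}=\tfrac1n\log_q$ and analytic rank is insensitive to the choice of nontrivial character,
\[
\AR(T^{\F_{q^n}})=\frac1n\,\AR\!\bigl(T\otimes_{\F_q}\Mult{d}{\F_{q^n}}{\F_q}\bigr),
\]
the right-hand side being the analytic rank over $\F_q$.

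Given this identity, both inequalities reduce to three soft facts, for which I would include short proofs: (i) analytic rank is monotone under the restriction preorder $\leq$ (if $A\leq B$ then $\AR(A)\leq\AR(B)$: restricting a leg to a subspace only enlarges the event $\{(x_2,\dots,x_d):\ B(\cdot,x_2,\dots,x_d)\equiv0\}$ whose probability equals the bias, and surjective maps on the legs leave the bias unchanged); (ii) the bias is multiplicative over direct sums of tensors supported on disjoint coordinate blocks, so for the diagonal $d$-tensor $\langle m\rangle:=\sum_{i=1}^m e_i^{\otimes d}$ we have $\langle m\rangle\otimes T\cong\bigoplus_{i=1}^m T$ and hence $\AR(\langle m\rangle\otimes T)=m\,\AR(T)$; and (iii) restriction is compatible with Kronecker products. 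For the upper bound, $\R(\Mult{d}{\F_{q^n}}{\F_q})=r$ means $\Mult{d}{\F_{q^n}}{\F_q}\leq\langle r\rangle$, so $\Mult{d}{\F_{q^n}}{\F_q}\otimes T\leq\langle r\rangle\otimes T$ and thus $\AR(\Mult{d}{\F_{q^n}}{\F_q}\otimes T)\leq r\,\AR(T)$. For the lower bound, $\Q(\Mult{d}{\F_{q^n}}{\F_q})=s$ means $\langle s\rangle\leq\Mult{d}{\F_{q^n}}{\F_q}$, so $\langle s\rangle\otimes T\leq\Mult{d}{\F_{q^n}}{\F_q}\otimes T$ and thus $s\,\AR(T)\leq\AR(\Mult{d}{\F_{q^n}}{\F_q}\otimes T)$. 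Combining these and dividing by $n$ via the displayed identity gives exactly the claimed chain of inequalities.

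The only genuinely non-routine step is the first: carefully matching the $\F_q$-restriction of scalars of $T^{\F_{q^n}}$ with $T\otimes_{\F_q}\Mult{d}{\F_{q^n}}{\F_q}$ and tracking how the bias transforms under the trace map. Everything after that is bookkeeping with standard, field-size-independent properties of analytic rank, together with the classical characterizations of rank ($\R(M)\leq r\iff M\leq\langle r\rangle$) and subrank ($\Q(M)\geq s\iff\langle s\rangle\leq M$). In particular, no quantitative estimate enters the argument, so it introduces no dependence on $q$.
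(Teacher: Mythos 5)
The paper offers no proof of this lemma---it is quoted directly from Chen and Ye---so the comparison is with their argument, and yours is essentially a faithful reconstruction of it: identify the restriction of scalars of $T^{\F_{q^n}}$ with $T\otimes\Mult{d}{\F_{q^n}}{\F_q}$ via the trace pairing, observe that passing from $\log_{q^n}$ to $\log_q$ rescales the analytic rank by $n$, and sandwich $\Mult{d}{\F_{q^n}}{\F_q}$ between diagonal tensors of sizes $\Q(\Mult{d}{\F_{q^n}}{\F_q})$ and $\R(\Mult{d}{\F_{q^n}}{\F_q})$. The only step where your parenthetical justification undersells the work is fact (i): event containment handles restricting the single leg you turn into a vanishing condition, and surjections are harmless, but a general restriction also maps the remaining $d-1$ legs into proper subspaces, and showing the bias does not decrease there requires the standard positivity argument (expanding subspace indicators in additive characters), not mere monotonicity of events. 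Since monotonicity of analytic rank under restrictions is a true and standard fact which you state you would prove, this is a presentational gap rather than a mathematical one.
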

As a result, to accomplish our goal we will show the following.
\begin{thm} \label{thm:qr}
There exist constants $C_d = \exp(O(d\log\log d))$ and $c_d = \Omega(d^{-2})$ such that
\[\R(\Mult{d}{\F_{q^n}}{\F_q}) \leq C_d \cdot n \quad\text{ and }\quad \Q(\Mult{d}{\F_{q^n}}{\F_q}) \geq c_d \cdot n.\]
\end{thm}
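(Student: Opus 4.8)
My plan is to realize $\Mult{d}{\F_{q^n}}{\F_q}$ concretely and then attack both bounds with evaluation--interpolation, globalized on an algebraic curve in the style of Chudnovsky--Chudnovsky. Writing $\F_{q^n}=\F_q[X]/(f)$ with $f$ irreducible of degree $n$, one checks that $\R(\Mult{d}{\F_{q^n}}{\F_q})$ equals the multilinear complexity over $\F_q$ of the $(d-1)$-fold multiplication map on polynomials of degree $<n$ (reduced mod $f$; the trace serves only as the output pairing and is harmless by nondegeneracy), and that $\Q$ is the largest $r$ with $\langle r\rangle$ a restriction of that map. I will use two soft facts throughout: (a) monotonicity under field extension --- if $\F_{q^n}\subseteq\F_{q^N}$ then $\Mult{d}{\F_{q^n}}{\F_q}$ is a restriction of $\Mult{d}{\F_{q^N}}{\F_q}$ (multiply inside the subring, then push the output down by $\mathrm{Tr}_{\F_{q^N}/\F_{q^n}}$), so $n$ may be padded freely; and (b) a base-change inequality $\R_{\F_q}(\Mult{d}{\F_{q^n}}{\F_q})\le \R_{\F_q}(\Mult{d+1}{\F_{q^s}}{\F_q})\cdot \R_{\F_{q^s}}(\Mult{d}{\F_{q^{ns}}}{\F_{q^s}})$, valid whenever $\gcd(n,s)=1$ (so that $\F_{q^n}\otimes_{\F_q}\F_{q^s}\cong\F_{q^{ns}}$, arranged by padding), obtained by replacing each scalar multiplication of $\F_{q^s}$ used by an $\F_{q^s}$-algorithm by an $\F_q$-algorithm for multiplying in $\F_{q^s}$.

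For the rank upper bound I take $s=O(\log_q d)=O(\log d)$ so that $q^s$ exceeds a suitable fixed polynomial in $d$. Over the now-``large'' field $\F_{q^s}$ the Chudnovsky--Chudnovsky construction for $(d-1)$-fold multiplication goes through: a curve $\mathcal{C}/\F_{q^s}$ whose number of rational points exceeds $(d-1)$ times its genus by a definite margin --- available once $q^s\gtrsim d^2$, e.g.\ along a Garcia--Stichtenoth tower --- together with a degree-$n$ place realizing $\F_{q^{ns}}$ and a divisor $G$ of degree $\Theta(n)$ with $\mathcal{L}(G)$ surjecting onto the residue field, yields $\R_{\F_{q^s}}(\Mult{d}{\F_{q^{ns}}}{\F_{q^s}})=O(dn)$ by recovering $(d-1)$-fold products, which lie in $\mathcal{L}((d-1)G)$, from their values at the rational points. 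The leftover factor $\R_{\F_q}(\Mult{d+1}{\F_{q^s}}{\F_q})$ is a multi-fold multiplication in a field of \emph{bounded} degree, and I bound it crudely by iterating binary multiplication, $\R_{\F_q}(\Mult{d+1}{\F_{q^s}}{\F_q})\le \R_{\F_q}(\Mult{3}{\F_{q^s}}{\F_q})^{d-1}\le (O(s))^{d-1}=\exp(O(d\log\log d))$, where $\R_{\F_q}(\Mult{3}{\F_{q^s}}{\F_q})=O(s)$ is the classical fact (again via towers of function fields with many rational points) that the bilinear complexity of a finite-field extension is linear in its degree. Combining, $\R_{\F_q}(\Mult{d}{\F_{q^n}}{\F_q})\le \exp(O(d\log\log d))\cdot n$; it is exactly the iteration of binary multiplication in the bounded field that produces the $\exp(O(d\log\log d))$ rather than a polynomial in $d$.

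For the subrank lower bound I run the same construction in reverse via the residue theorem, and here, pleasantly, no large base field is needed. Over $\F_q$ itself pick a curve $\mathcal{C}$ of genus $g=\Theta(n/d)$ carrying $r=\Theta(n/d)$ rational points $P_1,\dots,P_r$ (feasible for every $q$ since $g$ is large and a good tower supplies such a curve with $\gtrsim g$ rational points) together with a degree-$n$ place $Q$, divisors $G_1,\dots,G_d$ of degree $\Theta(r+g)$, functions $\phi_{k,i}\in\mathcal{L}(G_k)$ with $\phi_{k,i}(P_l)=\delta_{il}$, and a differential $\eta$ with simple poles suitably normalized at $Q$ and at the $P_l$ and vanishing along $\sum_k G_k$. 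The identity
\[\mathrm{Tr}_{\F_{q^n}/\F_q}\Big(\textstyle\prod_k\phi_{k,i_k}(Q)\Big)=\mathrm{Res}_Q\Big(\textstyle\prod_k\phi_{k,i_k}\,\eta\Big)=-\sum_{l}\Big(\textstyle\prod_k\phi_{k,i_k}(P_l)\Big)\mathrm{Res}_{P_l}(\eta)\]
then exhibits the maps $u^{(k)}\mapsto\sum_i u^{(k)}_i\,\phi_{k,i}(Q)$ as a restriction witnessing $\langle r\rangle\le\Mult{d}{\F_{q^n}}{\F_q}$. Balancing $\deg(\sum_k G_k)$ (which is at most $\sim n+r+g$ for such an $\eta$ to exist) against $\deg G_k\gtrsim r+2g$ (needed for the Lagrange functions) gives $r=\Omega(n/d^2)$.

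The main obstacle --- and the one genuinely new feature relative to the binary ($d=3$) case of \cite{ChenYe} --- is making evaluation/interpolation function for $d$-fold rather than $2$-fold products while keeping every constant independent of $q$ and $n$: for the rank this forces the curve to carry more than $(d-1)\deg G$ rational points, hence the $O(\log d)$-sized extension and the crude bound over it; for the subrank it is the competition between the number of available rational points, the genus, and the degree budget $\sum_k\deg G_k$, which is what limits $r$ to $\Omega(n/d^2)$. A further point of care is that the base-change inequality (b) degrades $\R$ gracefully but not $\Q$, so the subrank construction must be carried out over $\F_q$ directly rather than lifted from a large extension.
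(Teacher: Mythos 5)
Your rank upper bound follows the paper's route: pass to an extension field $\F_{q^s}$ with $s=\Theta(\log_q d)$ taken even, so that $q^s$ is a square of size at least a fixed polynomial in $d$; run the Garcia--Stichtenoth/Chudnovsky evaluation--interpolation argument there (the paper gets $O(d^2n)$ rather than your $O(dn)$ because the genera available in the tower are spaced by factors of $\ell$, but this is immaterial); and absorb the cost of simulating $\F_{q^s}$-arithmetic over $\F_q$ into a factor $(O(\log d))^{d-1}=\exp(O(d\log\log d))$, for which the trivial bound $R_d(s,q)\le s^{d-1}$ already suffices. This is exactly \cref{lem:qrrel,lem:r} combined as in the first half of the paper's proof.

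The subrank half has a genuine gap. You propose to run the residue-theorem construction directly over $\F_q$ on a curve of genus $\g=\Theta(n/d)$ carrying $\Theta(n/d)$ rational points, asserting that ``a good tower supplies such a curve'' for every $q$. But the Garcia--Stichtenoth towers exist only over square fields, and over a general $\F_q$ the only known families with $\Nff{1}=\Omega(\g)$ come from class field towers, a much heavier input that you do not invoke. Worse, the construction you sketch is \cref{lemma:Q-base-bound} (Chen--Ye's Lemma~3.4), whose proof genuinely uses the hypothesis $\Nff{1}(K)\ge\g(K)+1$; by the Drinfeld--Vl\u{a}du\c{t} bound $\limsup_{\g\to\infty}\Nff{1}/\g\le\sqrt{q}-1$, this hypothesis is unattainable for large-genus curves over $\F_2$ and $\F_3$, so for small non-square $q$ --- which the theorem must cover --- the direct construction cannot be carried out on any curve. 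The missing idea is \cref{lem:qmon}, the approximate monotonicity $Q_d(n,q)\ge Q_d(m,q)$ whenever $n-1\ge(d-1)(m-1)$, proved by an explicit restriction between power bases: it lets one shrink $n$ by a factor of roughly $d$, then pass to the square field via $Q_d(2m,q)\ge Q_d(m,q^2)$ from \cref{lem:qrrel}, and only then apply the tower construction of \cref{lem:q}; these two reductions are precisely what produce $c_d=\Omega(d^{-2})$. You correctly observe that subrank cannot be lifted from a large extension the way rank can, but without \cref{lem:qmon} (or a genuinely new construction valid over non-square fields) your argument does not apply to $q\in\{2,3\}$.
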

We have not made a serious attempt to optimize these constants.

\section{Preliminaries}

Henceforth abbreviate $\R(\Mult{d}{\F_{q^n}}{\F_q})$ and $\Q(\Mult{d}{\F_{q^n}}{\F_q})$ as $R_d(n,q)$ and $Q_d(n,q)$, respectively. We begin with a few results relating these quantities. As the proof of \cref{lem:qmon} in \cite{ChenYe} is rather terse, we include a proof here.

\begin{lem}[{\cite[Lemma 1]{BalRol} and \cite[Lemma 3.6]{ChenYe}}] \label{lem:qrrel}
For a positive integers $m$ and $n$, we have
$R_d(n,q) \leq R_d(mn, q) \leq R_d(n, q^m) R_d(m, q)$ and $Q_d(mn, q) \geq Q_d(n, q^m) Q_d(m, q)$.
\end{lem}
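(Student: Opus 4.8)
The plan is to treat the three bounds separately, in all cases via the tower $\F_q \subseteq \F_{q^m} \subseteq \F_{q^{mn}}$. Write $F = \F_q$, $K = \F_{q^m}$, $L = \F_{q^{mn}}$, so that $[K:F]=m$, $[L:K]=n$, and $R_d(m,q)$, $R_d(n,q^m)$, $R_d(mn,q)$ are the ranks (over $F$, $K$, $F$ respectively) of $\Mult{d}{K}{F}$, $\Mult{d}{L}{K}$, $\Mult{d}{L}{F}$, with the analogous statement for $Q$. The two submultiplicativity bounds will follow from the principle that a $(d-1)$-linear algorithm for multiplication in $L$ over $F$ can be assembled by first running one over $K$ and then replacing each of its $K$-arithmetic operations by an algorithm over $F$; the monotonicity bound will follow from the fact that multiplication in the subfield $\F_{q^n}\subseteq L$ is a tensor restriction of multiplication in $L$.

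For the monotonicity $R_d(n,q)\le R_d(mn,q)$: since $n\mid mn$, the field $\F_{q^n}$ is a subfield of $L$; fix any $F$-linear projection $\pi\colon L\to \F_{q^n}$ that restricts to the identity on $\F_{q^n}$, obtained from an $F$-subspace complement. For $x_1,\dots,x_{d-1}\in \F_{q^n}$ the product $x_1\cdots x_{d-1}$, computed in $L$, already lies in $\F_{q^n}$, hence equals $\pi(x_1\cdots x_{d-1})$. Therefore the $d$-tensor $\Mult{d}{\F_{q^n}}{F}$ is obtained from $\Mult{d}{L}{F}$ by applying the inclusion $\F_{q^n}\hookrightarrow L$ to each of the $d-1$ input factors and $\pi$ to the output factor; since applying linear maps to the factors of a tensor cannot increase its rank, we get $R_d(n,q)\le R_d(mn,q)$.

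For $R_d(mn,q)\le R_d(n,q^m)\,R_d(m,q)$: fix an optimal rank decomposition of $\Mult{d}{L}{K}$, namely $K$-linear functionals $\lambda^{(t)}_j\colon L\to K$ and elements $w_j\in L$ (for $1\le t\le d-1$, $1\le j\le R_d(n,q^m)$) with $x_1\cdots x_{d-1}=\sum_j\bigl(\prod_t\lambda^{(t)}_j(x_t)\bigr)w_j$ for all $x_t\in L$; and an optimal rank decomposition of $\Mult{d}{K}{F}$, namely $F$-linear functionals $\nu^{(t)}_i\colon K\to F$ and $v_i\in K$ with $y_1\cdots y_{d-1}=\sum_i\bigl(\prod_t\nu^{(t)}_i(y_t)\bigr)v_i$ for all $y_t\in K$. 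Substituting $y_t=\lambda^{(t)}_j(x_t)$ into the second identity and then into the first yields $x_1\cdots x_{d-1}=\sum_{i,j}\bigl(\prod_t(\nu^{(t)}_i\circ\lambda^{(t)}_j)(x_t)\bigr)(v_iw_j)$, in which each $\nu^{(t)}_i\circ\lambda^{(t)}_j\colon L\to F$ is $F$-linear and each $v_iw_j$ lies in $L$. This exhibits $R_d(n,q^m)\,R_d(m,q)$ rank-one $F$-tensors summing to $\Mult{d}{L}{F}$, as desired. The subrank bound is the exact dual. Here I would use the standard reformulation that $\langle r\rangle$ is a restriction of $\Mult{d}{E'}{E}$ precisely when there are $E$-linear maps $\alpha_1,\dots,\alpha_{d-1}\colon E^r\to E'$ and $\beta\colon E'\to E^r$ with $\beta\bigl(\alpha_1(c_1)\cdots\alpha_{d-1}(c_{d-1})\bigr)=c_1\odot\cdots\odot c_{d-1}$ (coordinatewise product) for all $c_t\in E^r$; taking such data for $L/K$ with $r=Q_d(n,q^m)$ and for $K/F$ with $r=Q_d(m,q)$, identifying $F^{Q_d(n,q^m)Q_d(m,q)}$ with $(F^{Q_d(m,q)})^{Q_d(n,q^m)}$, and inserting the $K/F$ encoders/decoders blockwise inside the $L/K$ ones produces $F$-linear data witnessing $\langle Q_d(n,q^m)Q_d(m,q)\rangle\le\Mult{d}{L}{F}$, since composing the two defining identities in sequence realizes all $Q_d(n,q^m)Q_d(m,q)$ coordinatewise products through a single $L$-product.

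The one point that needs care is scalar bookkeeping in the last two arguments: a $K$-linear functional on $L$ is automatically $F$-linear, so composition along the tower stays inside the world of $F$-linear maps, but multiplying two elements of $K$ is a genuinely $F$-bilinear operation — which is exactly why the tower bounds are multiplicative, picking up the factor $R_d(m,q)$ (resp.\ $Q_d(m,q)$), rather than equalities. Beyond this, the only thing worth double-checking is that the subrank reformulation I used (input-encoders, output-decoder) is the correct unwinding of ``$\langle r\rangle$ is a restriction of the multiplication tensor'', after dualizing the maps on the $d-1$ input factors.
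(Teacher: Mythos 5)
The paper does not prove \cref{lem:qrrel} at all --- it is quoted directly from \cite[Lemma 1]{BalRol} and \cite[Lemma 3.6]{ChenYe} --- so there is no in-paper argument to compare against. Your proof is correct and is essentially the standard tower argument underlying those cited lemmas: monotonicity via realizing $\Mult{d}{\F_{q^n}}{\F_q}$ as a restriction of $\Mult{d}{\F_{q^{mn}}}{\F_q}$ (inclusion on the $d-1$ input factors, an $\F_q$-linear projection on the output factor), submultiplicativity of rank by composing an optimal decomposition of $\Mult{d}{L}{K}$ with one of $\Mult{d}{K}{F}$ (where $F$-linearity of the composites $\nu^{(t)}_i\circ\lambda^{(t)}_j$ and membership $v_iw_j\in L$ are exactly the points that need checking, and you check them), and supermultiplicativity of subrank by nesting the encoder/decoder data blockwise. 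Your unwinding of ``$\langle r\rangle$ is a restriction of $\Mult{d}{E'}{E}$'' as the existence of $E$-linear maps $\alpha_1,\dots,\alpha_{d-1}\colon E^r\to E'$ and $\beta\colon E'\to E^r$ with $\beta(\alpha_1(c_1)\cdots\alpha_{d-1}(c_{d-1}))=c_1\odot\cdots\odot c_{d-1}$ is the correct dualization (the maps on the $d-1$ contravariant factors dualize to the encoders), so the one point you flagged for verification does go through.
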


\begin{lem}[{\cite[Lemma 3.5]{ChenYe}}] \label{lem:qmon}
If $n-1 \geq (d-1)(m-1)$, then $Q_d(n,q) \geq Q_d(m,q)$.
\end{lem}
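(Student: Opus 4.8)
The plan is to exhibit $\Mult{d}{\F_{q^m}}{\F_q}$ as a restriction of $\Mult{d}{\F_{q^n}}{\F_q}$; since the subrank of a restriction of a tensor is at most the subrank of the tensor, this gives $Q_d(m,q)\le Q_d(n,q)$ immediately. To set up, fix irreducible polynomials $g,h\in\F_q[x]$ of degrees $n$ and $m$, realize $\F_{q^n}=\F_q[x]/(g)$ and $\F_{q^m}=\F_q[x]/(h)$, and identify each field, as an $\F_q$-vector space, with its set of polynomial representatives of degree $<n$ (resp.\ $<m$). This choice is harmless: the multiplication tensor is determined up to $\F_q$-linear isomorphisms of its legs, which preserve the subrank.

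Next I would write down two $\F_q$-linear maps: the inclusion $\iota\colon\F_{q^m}\hookrightarrow\F_{q^n}$ that reads a polynomial of degree $<m$ as one of degree $<n$, and the surjection $\pi\colon\F_{q^n}\twoheadrightarrow\F_{q^m}$ that reduces a polynomial of degree $<n$ modulo $h$ (this $\pi$ is $\F_q$-linear but \emph{not} a ring map, which is fine). Restricting $\Mult{d}{\F_{q^n}}{\F_q}$ by pre-composing its underlying multiplication map $\F_{q^n}^{d-1}\to\F_{q^n}$ with $\iota$ in each of the $d-1$ arguments and post-composing with $\pi$ produces the $d$-tensor of the $\F_q$-multilinear map
\[(a_1,\dots,a_{d-1})\;\longmapsto\;\pi\bigl(\iota(a_1)\cdots\iota(a_{d-1})\bigr),\]
where the product is taken in $\F_{q^n}$. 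The claim is that this map is nothing but $(d-1)$-ary multiplication in $\F_{q^m}$, i.e.\ it equals $\Mult{d}{\F_{q^m}}{\F_q}$.

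This is the single point where the hypothesis enters. Writing each $a_i$ as a polynomial $u_i$ of degree $<m$, the product $u_1\cdots u_{d-1}$ has degree at most $(d-1)(m-1)\le n-1<n$, so its passage into $\F_{q^n}=\F_q[x]/(g)$ requires no reduction modulo $g$: the element $\iota(a_1)\cdots\iota(a_{d-1})\in\F_{q^n}$ is literally the polynomial $u_1\cdots u_{d-1}$. Applying $\pi$ then gives $u_1\cdots u_{d-1}\bmod h$, which is exactly the product $a_1\cdots a_{d-1}$ in $\F_{q^m}$. Hence $\Mult{d}{\F_{q^m}}{\F_q}$ is a restriction of $\Mult{d}{\F_{q^n}}{\F_q}$, proving $Q_d(n,q)\ge Q_d(m,q)$. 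I do not expect a serious obstacle here; the only thing to watch is the direction of the restriction maps — on the $d-1$ argument legs one pre-composes with a linear map of the argument spaces (equivalently, applies a linear map to their duals), while on the output leg one post-composes with an arbitrary linear map such as $\pi$ — and once that is set up correctly the proof reduces to the degree count above.
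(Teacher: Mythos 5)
Your proposal is correct and is essentially the paper's own argument: the paper picks primitive elements $\a,\b$ with $\F_{q^m}=\F_q[\a]$, $\F_{q^n}=\F_q[\b]$ and uses the linear maps $\a^i\mapsto\b^i$ ($i\le m-1$) and $\b^i\mapsto\a^i$ ($i\le n-1$), which under the identification $\a\leftrightarrow x\bmod h$, $\b\leftrightarrow x\bmod g$ are exactly your $\iota$ and $\pi$. The key step is the same degree count $(d-1)(m-1)\le n-1$ showing no reduction modulo the degree-$n$ polynomial occurs, so $\Mult{d}{\F_{q^m}}{\F_q}$ is a restriction of $\Mult{d}{\F_{q^n}}{\F_q}$ and monotonicity of subrank finishes.
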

\begin{proof}
Write $\F_{q^m} = \F_q[\alpha]$ and $\F_{q^n} = \F_q[\beta]$. Define linear maps $f \colon \F_{q^m} \to \F_{q^n}$ and $g \colon \F_{q^n} \to \F_{q^m}$ such that $f(\alpha^i) = \beta^i$ for $i \leq m-1$ and $g(\beta^i) = \alpha^i$ for $i \leq n-1$.

Put $k=d-1$. We claim that for every $x_1, \ldots, x_k \in \F_{q^m}$, we have
\[g(f(x_1) f(x_2) \cdots f(x_k)) = x_1 x_2 \cdots x_k.\]
Since both sides are $\F_q$-multilinear in $x_1,\ldots,x_{k}$,
equality holds if and only if it holds over the basis $\set{1,\a,\ldots,\a^{m-1}}$.
And indeed, for all choices $x_i = \a^{r_i}$ with $r_i \le m-1$, and $r = \sum_{i=1}^k r_i$, we have 
\[g(f(x_1)\cdots f(x_k)) = g(\b^r) = \a^r = x_1\cdots x_k ,\]
where the second equality uses $r \le k(m-1) \le n-1$ by the statement's assumption.
This implies that $\Mult{d}{\F_{q^m}}{\F_q}$ is a restriction 
of $\Mult{d}{\F_{q^n}}{\F_q}$, so the result follows.
\end{proof}

By a function field $K$ we mean an algebraic function field of one variable. We denote by $\g(K)$ its genus, and by $\Nff{n}(K)$ the number of places of $K$ of degree $n$.
For formal definitions, see \cite{Stichtenoth}. However, knowledge of these definitions is not necessary to follow the remainder of this paper, as all results using them will be suitably black-boxed.

\begin{lem}[\cite{BalRol}]\label{lemma:R-base-bound}
Suppose $K$ is a function field over $\F_q$ and $n>1$, $d>1$, and $N$ are positive integers such that
\begin{enumerate}[label=(\alph*)]
\item $\Nff{1}(K) \geq \g(K) + 1$
\item $(d-1)(n + \g(K) - 1) < N$
\item $\Nff{1}(K) \geq N$
\item $\Nff{n}(K) \geq 1$.
\end{enumerate}
Then $R_d(n, q) \leq N$.
\end{lem}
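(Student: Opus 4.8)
\emph{Proof proposal.} The plan is to bound $R_d(n,q)=\R(\Mult{d}{\F_{q^n}}{\F_q})$ from above by $N$ via the function-field interpolation method of Chudnovsky and Chudnovsky, generalized from bilinear multiplication ($d=3$) to the $(d-1)$-fold product. First I would set up the algorithm abstractly. By hypothesis (d) I would fix a place $Q$ of $K$ of degree $n$, whose residue field is $\F_{q^n}$; reduction modulo the maximal ideal at $Q$ gives an $\F_q$-linear map $\mathrm{ev}_Q\colon\mathcal{L}(D)\to\F_{q^n}$ for any divisor $D$ with $v_Q(D)=0$. By (c) I would fix distinct rational places $P_1,\dots,P_N$, and for a divisor with every $v_{P_j}(D)=0$ let $\mathrm{ev}\colon\mathcal{L}((d-1)D)\to\F_q^N$, $f\mapsto(f(P_j))_j$, be joint evaluation. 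The crux is: if $D$ can be chosen with support disjoint from $\{Q,P_1,\dots,P_N\}$ so that $\mathrm{ev}_Q$ is surjective and $\mathrm{ev}$ is injective, then I am done. Indeed, fix an $\F_q$-linear section $\iota$ of $\mathrm{ev}_Q$ and an $\F_q$-linear left inverse $\rho$ of $\mathrm{ev}$; then for $a_1,\dots,a_{d-1}\in\F_{q^n}$ the function $h:=\iota(a_1)\cdots\iota(a_{d-1})$ lies in $\mathcal{L}((d-1)D)$ and is regular at $Q$ and at each $P_j$ (by the support condition on $D$), so, reduction being a ring homomorphism, $\mathrm{ev}_Q(h)=a_1\cdots a_{d-1}$ and $h(P_j)=\prod_i\iota(a_i)(P_j)$. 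Since $\rho\circ\mathrm{ev}$ is the identity, putting $\pi:=\mathrm{ev}_Q\circ\rho$ and $w_j:=\pi(e_j)\in\F_{q^n}$ gives
\[
  a_1\cdots a_{d-1}=\sum_{j=1}^{N}\Bigl(\prod_{i=1}^{d-1}\lambda_j(a_i)\Bigr)w_j,
  \qquad\text{where }\lambda_j(a):=\iota(a)(P_j)\in\F_q,
\]
and each $\lambda_j\colon\F_{q^n}\to\F_q$ is $\F_q$-linear. Each of the $N$ summands corresponds to a rank-one $d$-tensor, so $\Mult{d}{\F_{q^n}}{\F_q}$ has rank at most $N$, i.e.\ $R_d(n,q)\le N$.

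It remains to produce $D$, and I would take $\deg D=n+\g(K)-1$. With this choice, injectivity of $\mathrm{ev}$ is automatic: its kernel equals $\mathcal{L}((d-1)D-P_1-\cdots-P_N)$, which is zero since this divisor has degree $(d-1)(n+\g(K)-1)-N<0$ by (b). For surjectivity of $\mathrm{ev}_Q$, note that its image is an $\F_q$-subspace of $\F_{q^n}$ of dimension $\ell(D)-\ell(D-Q)$, while $\ell(D)\ge\deg D+1-\g(K)=n$ by Riemann's inequality; so as soon as $\ell(D-Q)=0$ we get $\ell(D)\le\ell(D-Q)+\deg Q=n$, whence $\ell(D)-\ell(D-Q)=n$ and $\mathrm{ev}_Q$ is onto. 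Since $\deg(D-Q)=\g(K)-1$, the lemma thus reduces to producing a non-special divisor class of degree $\g(K)-1$ admitting a representative $D-Q$ with $v_Q(D)=v_{P_1}(D)=\dots=v_{P_N}(D)=0$.

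I expect this last existence step to be the main obstacle, and it is exactly where hypotheses (a) and (c) enter. Having $\Nff{1}(K)\ge\g(K)+1$ rational places forces the locus of special classes (a ``theta divisor'') inside $\mathrm{Pic}^{\g(K)-1}(\F_q)$ to miss some class, so a non-special class of degree $\g(K)-1$ exists; and the abundance of rational places, together with $n>1$, is what lets this class be represented with its support kept away from the finitely many places $Q,P_1,\dots,P_N$. This existence statement is the input imported from \cite{BalRol}; everything else above is degree-counting plus the bookkeeping of the interpolation identity.
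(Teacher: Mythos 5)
Your argument is correct, but it takes a genuinely more self-contained route than the paper, whose proof of this lemma is essentially a citation: the authors note that setting $r=1$ in \cite[Theorem~2]{BalRol} together with Remark~1 there gives the statement with $N=\Nff{1}(K)$, and that inspecting that proof shows surplus degree-one places beyond $N$ are harmless. You instead reconstruct the underlying Chudnovsky--Chudnovsky interpolation argument from scratch, and the reconstruction is sound: the evaluation-at-$Q$ versus evaluation-at-$P_j$ bookkeeping yielding a rank-$\le N$ decomposition, the degree count $(d-1)(n+\g(K)-1)-N<0$ forcing injectivity via (b), and the reduction of surjectivity of $\mathrm{ev}_Q$ to $\ell(D-Q)=0$ for a class of degree $\g(K)-1$ are exactly how the cited theorem is proved. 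Two attributions are worth tightening: hypothesis (a) enters only through the Ballet--Le Brigand existence theorem for non-special divisors of degree $\g(K)-1$ (which you rightly flag as the imported input), whereas keeping the support of $D$ away from $Q,P_1,\dots,P_N$ needs only the approximation theorem, not an abundance of rational places; and the role of $n>1$ is to guarantee that $Q$ is distinct from the rational places $P_1,\dots,P_N$. A side benefit of your route is that the paper's closing observation --- that having $\Nff{1}(K)\ge N$ rather than $\Nff{1}(K)=N$ does not affect the argument --- becomes automatic, since you simply select $N$ of the available rational places.
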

\begin{proof}
Setting $r=1$ in \cite[Theorem 2]{BalRol} and incorporating Remark 1 yields this result with $N = \Nff{1}(K)$. However, examining the proof reveals that the existence of extra places of degree $1$ does not affect the argument.
\end{proof}

\begin{lem}[\cite{ChenYe}]\label{lemma:Q-base-bound}
Suppose $K$ is a function field over $\F_q$ and $n>1$, $d>1$, and $N$ are positive integers such that
\begin{enumerate}[label=(\alph*)]
\item $\Nff{1}(K) \geq \g(K) + 1$
\item $(d-1)(N + \g(K) - 1) < n$
\item $\Nff{1}(K) \geq N$
\item $\Nff{n}(K) \geq 1$.
\end{enumerate}
Then $Q_d(n, q) \geq N$.
\end{lem}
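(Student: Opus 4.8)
The plan is to run the Chudnovsky--Chudnovsky evaluation--interpolation argument that underlies \cref{lemma:R-base-bound}, but with the degree-$n$ place and the rational places trading roles; compared to that proof, this interchanges injectivity and surjectivity of the two evaluation maps, and correspondingly it is hypothesis~(b) (rather than $(d-1)(n+\g(K)-1)<N$) that does the work. Concretely, I would use the standard reformulation: $Q_d(n,q)\ge N$ holds precisely when there exist $\F_q$-linear maps $\sigma_1,\dots,\sigma_{d-1}\colon\F_q^N\to\F_{q^n}$ and $\tau\colon\F_{q^n}\to\F_q^N$ with
\[
\tau\bigl(\sigma_1(u_1)\,\sigma_2(u_2)\cdots\sigma_{d-1}(u_{d-1})\bigr)=u_1 * u_2 * \cdots * u_{d-1}\qquad\text{for all }u_1,\dots,u_{d-1}\in\F_q^N,
\]
where $*$ is the coordinatewise product (this says the diagonal tensor $\langle N\rangle$ is a restriction of $\Mult{d}{\F_{q^n}}{\F_q}$). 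So the task is to build such maps.

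First I would fix the function-field data: by~(c) and~(a), $K$ has $N$ distinct degree-$1$ places $P_1,\dots,P_N$, and by~(d) it has a place $Q$ of degree $n$; then choose a divisor $D$ of degree $N+\g(K)-1$ whose support avoids $P_1,\dots,P_N$ and $Q$. The two facts I need are: (i) the reduction map $\mathrm{ev}_Q\colon L((d-1)D)\to\F_{q^n}$ is \emph{injective}; and (ii) the evaluation map $\mathrm{ev}\colon L(D)\to\F_q^N$, $f\mapsto(f(P_1),\dots,f(P_N))$, is \emph{surjective}. Fact~(i) is where~(b) enters and is immediate: a function in $L((d-1)D)$ vanishing at $Q$ lies in $L((d-1)D-Q)$, a space attached to a divisor of degree $(d-1)(N+\g(K)-1)-n<0$, hence is $0$. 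For fact~(ii), Riemann--Roch gives $\dim L(D)\ge\deg D-\g(K)+1=N$, and surjectivity of $\mathrm{ev}$ amounts to $\dim L(D-P_1-\cdots-P_N)=\dim L(D)-N$; since $\deg(D-P_1-\cdots-P_N)=\g(K)-1$, this is the exact analogue of the surjectivity of $\mathrm{ev}_Q\colon L(D)\to\F_{q^n}$ (with $\deg D=n+\g(K)-1$) used in the proof of \cref{lemma:R-base-bound}, and I would establish it the same way — using hypothesis~(a) together with the freedom to choose $D$ within its degree class, i.e.\ the ``extra rational places'' observation of \cite{BalRol}, now applied with the effective degree-$N$ divisor $P_1+\cdots+P_N$ in place of $Q$.

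Granting~(i) and~(ii), the maps are assembled as follows. Fix an $\F_q$-linear section $s\colon\F_q^N\to L(D)$ of $\mathrm{ev}$, set $\sigma_j:=\mathrm{ev}_Q\circ s$ for each $j$, and set $\tau:=\mathrm{ev}\circ\bigl(\mathrm{ev}_Q|_{L((d-1)D)}\bigr)^{-1}$ on the image of $\mathrm{ev}_Q|_{L((d-1)D)}$ (well defined by~(i)), extended arbitrarily to an $\F_q$-linear map $\F_{q^n}\to\F_q^N$. Given $u_1,\dots,u_{d-1}\in\F_q^N$, put $f_j:=s(u_j)\in L(D)$ and $F:=f_1\cdots f_{d-1}\in L((d-1)D)$. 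Then $F(P_i)=\prod_j f_j(P_i)=\prod_j(u_j)_i$, so $\mathrm{ev}(F)=u_1 * \cdots * u_{d-1}$, while $\mathrm{ev}_Q(F)=\prod_j\mathrm{ev}_Q(f_j)=\sigma_1(u_1)\cdots\sigma_{d-1}(u_{d-1})$ since reduction at $Q$ is a ring homomorphism on functions regular at $Q$. As $F$ is the unique element of $L((d-1)D)$ reducing to $\mathrm{ev}_Q(F)$ at $Q$, we get $\tau\bigl(\sigma_1(u_1)\cdots\sigma_{d-1}(u_{d-1})\bigr)=\tau\bigl(\mathrm{ev}_Q(F)\bigr)=\mathrm{ev}(F)=u_1 * \cdots * u_{d-1}$, the desired identity, so $Q_d(n,q)\ge N$.

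I expect the only genuine obstacle to be fact~(ii): forcing evaluation at $N$ rational places to be onto $\F_q^N$ when $\deg D$ is only $N+\g(K)-1$ rather than the comfortable $N+2\g(K)-1$. This is exactly where hypothesis~(a) is needed; as in \cref{lemma:R-base-bound} the cleanest route is to quote \cite{BalRol}, while a self-contained argument would show that, given $N_1(K)\ge\g(K)+1$, one can arrange both $D$ and $D-P_1-\cdots-P_N$ to be non-special, whence $\dim L(D)=N$ and $\dim L(D-P_1-\cdots-P_N)=0$.
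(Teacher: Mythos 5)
Your proposal is correct, and it differs from the paper's treatment only in that the paper does not actually prove this lemma: its ``proof'' is a citation of \cite[Lemma~3.4]{ChenYe}, plus the observation that the hypothesis stated there as $N \geq \g(K)+1$ may be weakened to condition (a), $\Nff{1}(K) \geq \g(K)+1$, since that is all the cited argument uses. What you have written out is essentially that cited argument. The restriction reformulation of the subrank matches the paper's own usage (compare the proof of \cref{lem:qmon}); the injectivity of $\mathrm{ev}_Q$ on $L((d-1)D)$ is indeed immediate from hypothesis (b) via the negative-degree space $L((d-1)D-Q)$; and the assembly of the maps $\sigma_j$ and $\tau$ is sound, with the roles of the rational places and the degree-$n$ place correctly interchanged relative to \cref{lemma:R-base-bound}. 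The one load-bearing step you leave as an expectation---choosing $D$ of degree $N+\g(K)-1$ so that $D-P_1-\cdots-P_N$, of degree $\g(K)-1$, is non-special, which forces $\mathrm{ev}\colon L(D)\to\F_q^N$ to be an isomorphism---is precisely where condition (a) is consumed: one quotes the existence theorem for non-special divisors of degree $\g(K)-1$ in a function field with $\Nff{1}(K)\ge\g(K)+1$ (due to Ballet and Le Brigand; this is also what condition (a) buys in \cref{lemma:R-base-bound}), translates such a divisor by $P_1+\cdots+P_N$, and then moves the result within its \emph{linear equivalence} class---not merely its degree class, since non-specialty is a property of the class---to avoid the $P_i$ and $Q$. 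With that citation made explicit your proof is complete, and it has the added value of explaining why the paper's relaxation of condition (a) is legitimate.
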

\begin{proof}
This is Lemma 3.4 in \cite{ChenYe} except for the fact that condition (a) appears in the original as $N \geq \g(K)+1$. However, only the weaker condition that $\Nff{1}(K) \geq \g(K)+1$ is used.
\end{proof}

\begin{lem}[{\cite[Corollary 5.2.10]{Stichtenoth}}]\label{lemma:high-deg-place}
If $K$ is a function field over $\F_q$ and $n$ is a positive integer such that $\Nff{1}(K) \geq 1$ and $2\g(K)+1 \leq q^{n/2}-q^{(n-1)/2}$, then $\Nff{n}(K) \geq 1$.
\end{lem}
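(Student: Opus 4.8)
The plan is to deduce the existence of a degree-$n$ place from the Hasse--Weil bound, via the standard dictionary between places of $K$ and rational points of its constant field extensions. For a positive integer $m$, set $P_m := \Nff{1}(K\F_{q^m})$, the number of degree-one places of $K\F_{q^m}$. The basic identity
\[
    P_m = \sum_{d \mid m} d\,\Nff{d}(K)
\]
holds because a degree-$d$ place of $K$ splits into exactly $d$ degree-one places of $K\F_{q^m}$ precisely when $d \mid m$. The one external input I would invoke is the Hasse--Weil bound (the Riemann hypothesis for function fields, from the same chapter of \cite{Stichtenoth}), applied to $K\F_{q^m}$, whose genus is again $\g(K)$: this gives $|P_m - (q^m + 1)| \le 2\g(K)\,q^{m/2}$ for every $m \ge 1$.

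Granting this, I would argue by contradiction. Suppose $\Nff{n}(K) = 0$. Isolating the top term in the identity for $P_n$ gives
\[
    n\,\Nff{n}(K) = P_n - \sum_{\substack{d \mid n \\ d < n}} d\,\Nff{d}(K).
\]
Every proper divisor $d$ of $n$ satisfies $d \le n/2$, and $d\,\Nff{d}(K) \le P_d$ since $P_d = \sum_{e \mid d} e\,\Nff{e}(K)$ with nonnegative summands. Feeding in the Hasse--Weil lower bound on $P_n$ and the upper bounds on the $P_d$ yields
\[
    n\,\Nff{n}(K) \ge q^n + 1 - 2\g(K)\,q^{n/2} - \sum_{\substack{d \mid n \\ d < n}} \bigl(q^d + 1 + 2\g(K)\,q^{d/2}\bigr).
\]
It then suffices to show the right-hand side is strictly positive under the hypothesis $2\g(K) + 1 \le q^{n/2} - q^{(n-1)/2}$, contradicting $\Nff{n}(K) = 0$ and hence forcing $\Nff{n}(K) \ge 1$.

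The remaining estimate is elementary and, I expect, the least delicate step: the proper-divisor sums $\sum q^d$ and $2\g(K)\sum q^{d/2}$ are geometric and dominated by their top terms at $d \le n/2$, so the genuine constraint is the balance of the main term $q^n$ against the genus term $2\g(K)\,q^{n/2}$ coming from $P_n$. Bounding each geometric sum in closed form shows the stated threshold is comfortably sufficient---in fact essentially tight only at $n = 2$, and strictly generous for larger $n$---so the real content of the lemma is the Hasse--Weil input rather than the bookkeeping. Two small points worth flagging: the genus of a constant field extension is unchanged, which is what lets me use $\g(K)$ uniformly in $m$; and the hypothesis $\Nff{1}(K) \ge 1$ is not actually used in this argument (the Hasse--Weil bound holds unconditionally), so it is a harmless extra assumption, presumably retained for the intended application where a rational place is anyway at hand.
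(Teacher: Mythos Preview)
The paper does not supply a proof of this lemma; it is quoted as a black box from Stichtenoth and explicitly flagged as such (``all results using them will be suitably black-boxed''). Your sketch is the standard Hasse--Weil argument that underlies Stichtenoth's corollary, and it is correct.

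One small correction to your closing remark: the hypothesis $\Nff{1}(K) \ge 1$ is not entirely idle. It guarantees that $\F_q$ is the \emph{full} constant field of $K$ (since the constant field embeds into every residue field), and this is exactly what the Hasse--Weil bound needs. If the full constant field were $\F_{q^r}$ with $r>1$, every place would have degree divisible by $r$, the zeta identity would be over $\F_{q^r}$, and your estimate in terms of $q$ would be off. Under Stichtenoth's standing conventions the full-constant-field condition is usually baked into the phrase ``function field over $\F_q$'', so your claim of redundancy is right in spirit---but the hypothesis is doing real work if one reads the statement in isolation.
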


\begin{lem}[\cite{GS}]\label{lemma:tower}
If $\ell$ is a prime power and $i$ is a nonnegative integer, there exists a function field $K_{\ell,i}$ over $\F_{\ell^2}$ with $\g(K_{\ell,i}) < \ell^i$
and $\Nff{1}(K_{\ell,i}) \geq \ell^i(\ell-1)$.
\end{lem}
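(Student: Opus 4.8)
The plan is to realize $K_{\ell,i}$ inside the explicit Garcia--Stichtenoth tower over $\F_{\ell^2}$ and to read off the two bounds from a splitting analysis and a ramification (genus) computation. Let $p$ be the characteristic, so $\ell$ is a power of $p$. Let $F_0 = \F_{\ell^2}(x_0)$ be the rational function field and define recursively $F_{n+1} = F_n(x_{n+1})$ by
\[
x_{n+1}^{\ell} + x_{n+1} = \frac{x_n^{\ell}}{x_n^{\ell-1}+1}.
\]
Each step is an Artin--Schreier-type extension of degree $\ell$ (the map $z \mapsto z^\ell + z$ is additive with kernel of size $\ell$), and each step is nondegenerate -- which follows from the total ramification at $x_0 = \infty$ established below -- so that $[F_n:F_0] = \ell^n$. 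The goal is to prove
\[
\Nff{1}(F_n) \geq (\ell^2-\ell)\,\ell^n = (\ell-1)\ell^{n+1} \qquad\text{and}\qquad \g(F_n) < \ell^{n+1}.
\]
Taking $K_{\ell,i} = F_{i-1}$ for $i \geq 1$, and $K_{\ell,0} = F_0$, then yields the lemma, using for $i=0$ that $F_0$ has genus $0$ and $\ell^2 + 1 \geq \ell - 1$ rational places.

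For the point count, the first move is to rewrite the defining function over $\F_{\ell^2}$ as
\[
\frac{x^\ell}{x^{\ell-1}+1} = \frac{x^{\ell+1}}{x^\ell + x} = \frac{\operatorname{Nm}(x)}{\operatorname{Tr}(x)},
\]
where $\operatorname{Nm}$ and $\operatorname{Tr}$ are the norm and trace of $\F_{\ell^2}/\F_\ell$. I would take the splitting locus to be $S = \{\beta \in \F_{\ell^2} : \operatorname{Tr}(\beta) \neq 0\}$, which has $\ell^2 - \ell$ elements. Two observations make $S$ work: the image of $z \mapsto z^\ell + z$ on $\F_{\ell^2}$ is all of $\F_\ell$ (indeed $z^\ell + z = \operatorname{Tr}(z)$ there), and for $\beta \in S$, which forces $\beta \neq 0$, the value $\operatorname{Nm}(\beta)/\operatorname{Tr}(\beta)$ lies in $\F_\ell^\times$. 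Hence for $\beta \in S$ the equation $x^\ell + x = \operatorname{Nm}(\beta)/\operatorname{Tr}(\beta)$ has $\ell$ distinct roots in $\F_{\ell^2}$, each with trace $\operatorname{Nm}(\beta)/\operatorname{Tr}(\beta) \neq 0$ and hence again in $S$. Inducting on $n$, every rational place $x_0 = \beta$ with $\beta \in S$ splits completely into $\ell^n$ rational places of $F_n$, giving $\Nff{1}(F_n) \geq (\ell^2-\ell)\ell^n$. One also checks that $f$ has no pole on $S$, since its poles, at $x = \infty$ and at the roots of $x^{\ell-1} = -1$, all have trace $0$.

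The genus bound is the technical heart and the main obstacle. Applying the Hurwitz genus formula to $F_n/F_0$ gives $2\g(F_n) - 2 = -2\ell^n + \deg\operatorname{Diff}(F_n/F_0)$, so it suffices to control the different. At each step $F_k/F_{k-1}$, ramification can occur only over the poles of $f(x_{k-1})$ -- the places where $x_{k-1} = \infty$ or $x_{k-1}^{\ell-1} = -1$ -- and the place lying over $x_0 = \infty$ is totally ramified at every level and dominates. The difficulty is that each step is wildly ramified of degree $\ell$ (a power of $p$), so one cannot merely add up tame different exponents: I would track the pole orders of the $x_k$ at the ramified places, adjust the Artin--Schreier representatives so that these orders stay coprime to $p$, and apply the conductor formula (a totally ramified place at which the defining function has pole order $m$ coprime to $p$ has different exponent $(\ell-1)(m+1)$) together with transitivity of the different. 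For $n = 1$ this already gives $\deg\operatorname{Diff}(F_1/F_0) = 2\ell(\ell-1)$ and hence $\g(F_1) = (\ell-1)^2 < \ell^2$; propagating the bookkeeping up the tower yields an exact genus formula with leading term $\ell^{n+1}$ and strictly negative lower-order corrections, so that $\g(F_n) < \ell^{n+1}$. This global control of accumulating wild ramification -- guaranteeing the genus stays below $\ell^{n+1}$ rather than growing faster -- is the crux of the argument; by contrast the splitting step is routine.
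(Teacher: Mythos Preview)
Your approach is essentially the same as the paper's: both take $K_{\ell,i}$ to be the $(i-1)$st level of a Garcia--Stichtenoth tower over $\F_{\ell^2}$ and read off the genus and rational-point bounds from the standard splitting and ramification analysis. The only differences are cosmetic: the paper black-boxes the genus formula $\g = (\ell^{\lceil i/2\rceil}-1)(\ell^{\lfloor i/2\rfloor}-1)$ and the complete splitting of $\ell^2-\ell$ rational places by citing \cite{GS} directly, whereas you sketch those arguments yourself (and you write down the recursion of the second Garcia--Stichtenoth tower rather than the first, though the two have identical genus and splitting data). Your splitting argument is complete and correct; your genus paragraph is an accurate outline of the wild-ramification bookkeeping that \cite{GS} carries out in full, so nothing is missing relative to what the paper itself proves.
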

\begin{proof}
If $i = 0$, replace it with $i=1$; as we will show that in fact $\g(K_{\ell,1}) = 0$, this is fine.

If $i \geq 1$, \cite[Section 3]{GS} constructs a function field $T_i$ such that $[T_i : \F_{\ell^2}(x)] = \ell^{i-1}$, which we take to be $K_{\ell, i}$. By \cite[Remark 3.8]{GS}, we have $\g(K_{\ell,i}) = (\ell^{\ceil{i/2}} - 1)(\ell^{\floor{i/2}} - 1)$, which is both strictly less than $\ell^i$ and zero when $i = 1$. By \cite[Lemma 3.9]{GS}, there exist $\ell^2-\ell$ degree-one places of $\F_{\ell^2}(x)$ that totally split in $K_{\ell,i}$, so $\Nff{1}(K_{\ell,i}) \geq (\ell^2-\ell)[K_{\ell,i} : \F_{\ell^2}(x)] = \ell^i(\ell-1)$.
\end{proof}
Note that when applying \cref{lemma:R-base-bound,lemma:Q-base-bound} to these $K_{\ell,i}$, condition (a) is automatically satisfied. Also, \cref{lemma:high-deg-place} shows that $K_{\ell,i}$ has places of all degrees at least $i+2$, since
\[2\g(K_{\ell,i}) + 1 \leq 2\ell^i \leq \ell^i(\ell^2-\ell) = (\ell^2)^{(i+2)/2} - (\ell^2)^{(i+1)/2}.\]

\section{Proofs}
We will use the following basic fact, which is easy to show by casework.
\begin{fact}\label{fact:intervals}
    Suppose $a,b \in \setz$ and $x,y\in \setr$ such that $a \leq b$, $a \le y$, $x \le b$, and $y-x \ge 1$.
    Then $[a,b] \cap [x,y]$ contains an integer. 
\end{fact}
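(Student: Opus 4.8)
The plan is to exhibit a single integer $m$ and verify directly that it lies in both $[a,b]$ and $[x,y]$. The natural candidate, pushing $x$ up to the nearest integer but not below $a$, is $m = \max(a,\lceil x\rceil)$, which is an integer since $a \in \setz$. Membership in the two intervals amounts to the four inequalities $a \le m$, $m \le b$, $x \le m$, and $m \le y$, so the whole argument reduces to checking these four bounds.

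The first and third are immediate: $m \ge a$ and $m \ge \lceil x\rceil \ge x$ straight from the definition of $m$ and of the ceiling. For $m \le b$, I would use that $b$ is an integer: from $x \le b$ it follows that $\lceil x\rceil \le b$, and combined with the hypothesis $a \le b$ this gives $\max(a,\lceil x\rceil)\le b$. For $m \le y$, I would bound each term of the maximum separately, using the hypothesis $a \le y$ for the first term and the chain $\lceil x\rceil \le x+1 \le y$ for the second, where the final step invokes $y - x \ge 1$.

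The only place any hypothesis is genuinely at risk is the bound $\lceil x\rceil \le y$: rounding $x$ up could in principle overshoot $y$, and it is exactly the length condition $y-x\ge 1$ that forbids this. This is the crux of the casework suggested in the statement, which one could alternatively phrase as follows. If $x \le a$, take the integer $a$ itself: it lies in $[x,y]$ because $x \le a \le y$, and in $[a,b]$ because $a \le b$. If instead $x > a$, take $\lceil x\rceil$, which lies in $[x,y]$ by $x \le \lceil x\rceil \le x+1 \le y$ and in $[a,b]$ since $a < x \le \lceil x\rceil$ and $\lceil x\rceil \le b$. Either organization completes the proof, and I expect the verification of $m \le y$ (equivalently $\lceil x\rceil \le y$) to be the single nontrivial point.
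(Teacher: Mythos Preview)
Your proof is correct; the choice $m=\max(a,\lceil x\rceil)$ works exactly as you verify, and your alternative two-case split (according to whether $x\le a$) is precisely the kind of casework the paper has in mind. The paper does not spell out a proof beyond the remark ``which is easy to show by casework,'' so your argument is a faithful (and fully detailed) realization of that suggestion.
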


\begin{prop} \label{lem:r}
If $q \geq 64d^2$ is a square, then $R_d(n, q) \leq 8d^2n$.
\end{prop}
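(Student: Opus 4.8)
The plan is to apply \cref{lemma:R-base-bound} to a function field over $\F_q$ chosen according to the size of $n$ relative to $q$. The case $n=1$ is trivial ($R_d(1,q)=1$), so assume $n\ge 2$. If $(d-1)(n-1)\le q$, I would take $K$ to be the rational function field $\F_q(x)$, which has $\g(K)=0$, exactly $q+1$ places of degree one, and a place of every degree (standard; or invoke \cref{lemma:high-deg-place}, as $q\ge 4$). With $N=(d-1)(n-1)+1$, the four hypotheses of \cref{lemma:R-base-bound} are immediate --- (b) holds by the choice of $N$, and (c) reads $(d-1)(n-1)\le q$ --- so $R_d(n,q)\le (d-1)(n-1)+1\le dn\le 8d^2n$.

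Now suppose $(d-1)(n-1)>q$, and write $q=\ell^2$ with $\ell=\sqrt q$ a prime power, so that the hypothesis gives $\ell\ge 8d$ while $(d-1)(n-1)>\ell^2$ forces $n$ to be large (certainly $n>64d$). I would take $K=K_{\ell,i}$ from \cref{lemma:tower} for an exponent $i$ to be determined, and set $N=(d-1)(n+\g(K)-1)+1$. Condition (b) then holds by construction, and condition (a) is automatic; since $\g(K)<\ell^i$ and $\Nff{1}(K)\ge \ell^i(\ell-1)$, condition (c) follows once $\ell^i(\ell-d)\ge (d-1)(n-1)+1$, the bound $N\le 8d^2n$ follows once $\ell^i\le 8dn$, and condition (d) holds once $n\ge i+2$, since $K_{\ell,i}$ has places of all degrees at least $i+2$. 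So it suffices to produce an integer $i$ with $1\le i\le n-2$ and $\frac{(d-1)(n-1)+1}{\ell-d}\le \ell^i\le 8dn$.

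Passing to base-$\ell$ logarithms, the existence of such an $i$ is an instance of \cref{fact:intervals}: its hypotheses $1\le n-2$, $\ell\le 8dn$, and $\frac{(d-1)(n-1)+1}{\ell-d}\le \ell^{\,n-2}$ are routine (given that $n$ is large), and the length condition is $8dn(\ell-d)\ge \ell\bigl((d-1)(n-1)+1\bigr)$. This last inequality is the crux: bounding $(d-1)(n-1)+1\le (d-1)n$ reduces it to $8d(\ell-d)\ge \ell(d-1)$, i.e.\ $\ell(7d+1)\ge 8d^2$, which holds because $\ell\ge 8d$ --- and this is precisely where the hypothesis $q\ge 64d^2$ is spent. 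Then \cref{fact:intervals} yields the desired $i$, and \cref{lemma:R-base-bound} gives $R_d(n,q)\le N\le 8d^2n$.

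The main obstacle is this second regime, where the choice of $K_{\ell,i}$ must balance competing demands: its genus must be small enough for (b), yet it must retain enough degree-one places for (c), have a place of degree exactly $n$ for (d), and still keep $N$ below $8d^2n$. What makes this possible is that the window of admissible values of $\ell^i$ turns out to be multiplicatively wider than $\ell$ --- precisely under the hypothesis $q\ge 64d^2$ --- so it necessarily contains a power of $\ell$.
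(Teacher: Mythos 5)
Your proof is correct and follows essentially the same route as the paper: both apply \cref{lemma:R-base-bound} to the Garcia--Stichtenoth fields $K_{\ell,i}$ of \cref{lemma:tower} and use \cref{fact:intervals} to resolve an integrality constraint. The only difference is bookkeeping: the paper fixes $i$ via $\ell^i < 4dn \le \ell^{i+1}$ and uses the fact to choose $N$, whereas you fix $N$ minimally in terms of $i$ and use the fact to choose the exponent $i$ (plus a separate rational-function-field case for small $n$, which the paper absorbs into the case $i=0$).
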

\begin{proof}
Let $q = \ell^2$ where $\ell \geq 8d$. The $n=1$ case is trivial, so assume $n\geq 2$.

Choose $i \geq 0$ such that $\ell^i < 4dn \leq \ell^{i+1}$.
%We claim that there exists an integer
%\[N \in [2dn,8d^2n) \cap [2d\ell^i, \ell^{i+1}/2].\]
By \cref{fact:intervals}, there exists a (positive) integer
\[N \in [2dn,8d^2n-1] \cap [2d\ell^i, \ell^{i+1}/2].\]
Indeed, $2dn \leq \ell^{i+1}/2$, $2d\ell^i < 8d^2n$, and $\ell^{i+1}/2-2d\ell^i = \ell^i(\ell/2 - 2d) \ge 2d\ell^i \ge 1$.
%Indeed, we can take $N = \max(2dn, 2d\ell^i)$; since $2d\ell^i < 8d^2n$ and $2dn \leq \ell^{i+1}/2$, the claim follows from a simple case analysis.

We now apply \cref{lemma:R-base-bound} with $N$ as above and $K = K_{\ell,i}$. Recall that condition (a) is automatically satisfied. Condition (b) is satisfied since
\[N = N/2 + N/2 \geq dn + d\ell^i > (d-1)(n+\g(K)-1).\]
Condition (c) is satisfied since $\Nff{1}(K) \geq \ell^i(\ell-1) \geq \ell^{i+1}/2 \geq N$. To check condition (d), we only need to show that $n \geq i+2$. If not, then
\[n > \frac{N}{8d^2} \geq \frac{2d\ell^i}{8d^2} = \frac{\ell^i}{4d} \geq \frac{\ell^{n-1}}{4d} \geq \frac{\ell}{4d} 2^{n-2} \geq 2^{n-1} \geq n,\]
a contradiction.
We deduce that $R_d(n, q) \le N < 8d^2 n$, as desired.
\end{proof}

\begin{prop} \label{lem:q}
If $q$ is a square, then $Q_d(n,q) \geq n/(4d)$.
\end{prop}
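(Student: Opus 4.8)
The plan is to follow the proof of \cref{lem:r}, but using \cref{lemma:Q-base-bound} in place of \cref{lemma:R-base-bound}. First I would dispose of the range $n \le 4d$, where the bound is immediate: $n/(4d) \le 1 \le Q_d(n,q)$, since the multiplication tensor is nonzero and hence has subrank at least $1$ (one could also apply \cref{lem:qmon} with $m=1$). So assume $n > 4d$; then $n \ge 9$, and in particular $n > 1$ and $d > 1$, as required to apply \cref{lemma:Q-base-bound}. Since $q$ is a square, write $q = \ell^2$ with $\ell \ge 2$ a prime power, and take $K = K_{\ell,i}$ from \cref{lemma:tower} for an index $i$ to be chosen.

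Because $\g(K_{\ell,i}) < \ell^i$ and $\Nff{1}(K_{\ell,i}) \ge \ell^i(\ell-1)$, the four hypotheses of \cref{lemma:Q-base-bound} (for a positive integer $N$ to be chosen) reduce to: (a) holds automatically (as noted after \cref{lemma:tower}); (b) holds once $(d-1)(N + \ell^i) \le n$; (c) holds once $N \le \ell^i(\ell-1)$; (d) holds once $n \ge i + 2$. Set $A = n/(4d(\ell-1))$ and $B = n(3d+1)/(4d(d-1))$, and choose $i$ to be the largest integer with $\ell^i \le B - 1$. Granting the numerical claims $B \le n$, $B - 1 \ge 1$, and $B - 1 > \ell A$ (short consequences of $n > 4d$ and $d \ge 2$), the choice of $i$ gives $i \ge 0$ and, by maximality, $\ell^{i+1} > B - 1 > \ell A$, hence $A < \ell^i \le B - 1$.

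With this $i$, I would apply \cref{fact:intervals} to produce an integer $N \in [1,\ell^i(\ell-1)] \cap [n/(4d),\, n/(d-1) - \ell^i]$. Its hypotheses hold: $n/(4d) \le \ell^i(\ell-1)$ because $\ell^i \ge A$, and $(n/(d-1) - \ell^i) - n/(4d) = B - \ell^i \ge 1$ because $\ell^i \le B - 1$ (using $n/(d-1) - n/(4d) = B$), the two remaining hypotheses of \cref{fact:intervals} being immediate. Such an $N$ satisfies $N \le \ell^i(\ell-1)$ and $N \le n/(d-1) - \ell^i$, that is, $(d-1)(N+\ell^i) \le n$; moreover, since $\ell^i \le B - 1 < n$, one has $i \le n-2$ (otherwise $2^i \le \ell^i < n \le 2^{n-1}$, a contradiction), so condition (d) holds too. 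Thus \cref{lemma:Q-base-bound} applies and yields $Q_d(n,q) \ge N \ge n/(4d)$.

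The remaining work is to verify the three numerical claims; the only one requiring care is $B - 1 > \ell A$, where $(B-1)/A > (3d+1)(\ell-1)/(d-1) - (\ell-1) = 2(\ell-1)(d+1)/(d-1) \ge 2(\ell-1) \ge \ell$. I expect the main obstacle to be precisely the bookkeeping around this point: unlike in \cref{lem:r}, no lower bound is available on $q$ (equivalently on $\ell$), and in \cref{lemma:Q-base-bound} both $N$ and the genus must be small relative to $n$, so the window $[A, B-1]$ must be wide enough to contain a power of $\ell$ even when $\ell = 2$. This is why the constant $3d+1$, rather than something smaller, appears in $B$ — the slack factor $(d+1)/(d-1) > 1$ above being what keeps the argument alive.
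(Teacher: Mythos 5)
Your proof is correct and follows essentially the same route as the paper's: both apply \cref{lemma:Q-base-bound} to the Garcia--Stichtenoth field $K_{\ell,i}$ at a suitably chosen level $i$ and invoke \cref{fact:intervals} to produce the integer $N$, with all four conditions verified the same way. The only differences are bookkeeping --- the paper chooses $i$ with $2d\ell^i \le n < 2d\ell^{i+1}$ and takes $N \in [\ell^i, \lceil \ell^{i+1}/2\rceil] \cap [n/(4d), n/(2d)]$, absorbing the genus term via $N \ge \ell^i$, which makes the verification a bit shorter than your $A$, $B$ computation, but your choices work equally well.
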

\begin{proof}
Let $q = \ell^2$. The $n < 4d$ case is trivial, so assume $n \ge 4d$.

Choose $i \geq 0$ such that $2d\ell^i \leq n < 2d\ell^{i+1}$. 
By \cref{fact:intervals}, there exists a (positive) integer
\[N \in [\ell^i, \ceil{\ell^{i+1}/2}] \cap [n/(4d), n/(2d)].\]
Indeed, $\ell^i \le n/(2d)$, $n/(4d) \leq \ceil{\ell^{i+1}/2}$, 
and $n/(2d) - n/(4d) = n/(4d) \ge 1$.

We now apply \cref{lemma:Q-base-bound} with $N$ as above and $K = K_{\ell, i}$. Recall that condition (a) is automatically satisfied. Condition (b) is satisfied since
\[(d-1)(N + \g(K) - 1) < d(N + \ell^i) \leq 2dN \leq n.\]
Condition (c) is satisfied as $\Nff{1}(K) \geq \ell^i(\ell-1) \geq \ceil{\ell^{i+1}/2} \geq N$. To check condition (d), we only need to show that $n \geq i+2$; this is clear as $n \geq 2d\ell^i \geq 2^{i+2} \geq i+2$.
We deduce that $Q_d(n, q) \ge N \ge n/(4d)$, as desired.
\end{proof}

One can lift \cref{lem:r} to general finite fields by using \cref{lem:qrrel} to bound $R_d(n,q) \leq R_d(2n, q) \le R_d(n, q^2) R_d(2, q)$. 
However, no analogous argument for subrank exists, as in some sense it would require that every field contains a square field, which is false.
To nevertheless lift \cref{lem:q} to general finite fields, we instead use the approximate monotonicity property stated in \cref{lem:qmon}.

\begin{proof}[Proof of \cref{thm:qr}]
To bound the rank, let $r$ be the smallest even integer such that $q^r \geq 64d^2$; 
note that $r \le 2\log_q d + 8$.
Then \cref{lem:qrrel,lem:r} yield
\[R_d(n,q) \leq R_d(rn,q) \leq R_d(n,q^r) \cdot R_d(r,q) \leq r^{d-1} \cdot 8d^2n \le C_{d} \cdot n\]
with $C_{d} = 8d^2(2 \log_2 d + 8)^{d-1}$.

To bound the subrank, we claim that $c_d = 1/(8d^2)$ 
works. If $n \leq d^2$, the statement is trivial. 
Otherwise, let $m=\ceil{n/2d}$, and apply \cref{lem:qmon} to obtain $Q_d(n,q) \geq Q_d(2m,q)$, as \[(d-1)(2m-1) \le (d-1)(n/d + 1) = n-1 -(n/d-d) \le n-1.\]
Together with \cref{lem:qrrel,lem:q}, we deduce 
\[Q_d(n,q) \geq Q_d(2m,q) \geq Q_d(m,q^2) \geq \frac{m}{4d} \geq \frac{n}{8d^2} = c_d \cdot n.\qedhere\]
\end{proof}

\printbibliography
\end{document}